\documentclass[10pt,reqno]{amsart}
\usepackage{amsmath,amssymb,amsfonts,amsthm,mathrsfs,enumitem}
\usepackage{url}
\allowdisplaybreaks

\newtheorem{thm}{Theorem}[section]

\newtheorem{cor}[thm]{Corollary}
\newtheorem{lem}[thm]{Lemma}
\newtheorem{prop}[thm]{Proposition}

\theoremstyle{definition}

\theoremstyle{remark}
\newtheorem{rem}[thm]{Remark}

\numberwithin{equation}{section}

\newcommand{\F}{\mathbb F}
\newcommand{\Tr}{\operatorname{Tr}}

\newcommand{\gauss}[2]{\genfrac{[}{]}{0pt}{}{#1}{#2}_{\!2}}
\newcommand{\Kl}{\operatorname{Kl}}

\begin{document}

\title[Counting subspaces with prescribed Welch sums]
{Counting subspaces with prescribed Welch sums}
\author{Kaimin Cheng}
\address{School of Mathematical Sciences, China West Normal University, Nanchong 637002, P. R. China}
\email{ckm20@126.com}

\author{Jie Hu}
\address{School of Mathematical Sciences, China West Normal University, Nanchong 637002, P. R. China}
\email{2629664270@qq.com}
\subjclass[2020]{Primary 11T06; Secondary 11T23, 94D10}
\keywords{Welch function, APN function, Kloosterman sum}
\date{}

\begin{abstract}
Let $q=2^n$ with $n=2m+1$ for an integer $m\ge1$, and let $\F_q$ be the finite field with $q$ elements. Let $W_n(x)=x^{2^m+3}$ be the Welch function on $\F_q$. For
$1\le k\le n$ and $b\in\F_q$, denote by $N_{n,k}(b)$ the number of
$k$-dimensional $\F_2$-subspaces $E$ of $\F_q$ such that
\[
 \sum_{x\in E}W_n(x)=b.
\]
In this paper, we determine $N_{n,k}(b)$ for all $k$. The only nontrivial case is $k=3$,
for which the answer is given in terms of a binary Kloosterman sum. As a
consequence, we prove a recent conjecture of Hou and Zhao on the third-order
sum-freedom of the Welch function.
\end{abstract}

\maketitle

\section{Introduction}

Almost perfect nonlinear (APN) functions play an important role in symmetric
cryptography. They were introduced in connection with differential
cryptanalysis. In characteristic two, an APN function has differential
uniformity two, which is the smallest possible value; see, for example,
\cite{Nyberg1994}. Power functions over binary fields give many important
examples. One of them is the Welch function, which is defined by
\[
W_n(x)=x^{2^m+3},\qquad n=2m+1.
\]
Dobbertin \cite{Dobbertin1999} proved that the Welch function is APN. Its
almost bent property, or equivalently the three-valued cross-correlation
property conjectured by Welch, was proved by Canteaut, Charpin and Dobbertin
\cite{CanteautCharpinDobbertin2000}.

Carlet \cite{Carlet2025} introduced \emph{$k$th-order sum-freedom}. A function
$f:\F_{2^n}\to\F_{2^n}$ is $k$th-order sum-free if
\[
 \sum_{x\in A}f(x)\ne0
\]
for every $k$-dimensional affine $\F_2$-subspace $A$ of $\F_{2^n}$. When
$k=2$, this is equivalent to APNness. More generally, if
$A=z+\langle e_1,\ldots,e_k\rangle$, then
\[
 \sum_{x\in A}f(x)=D_{e_1}\cdots D_{e_k}f(z),
\]
where $D_e f(x)=f(x+e)+f(x)$. Thus sum-freedom is related to higher-order
derivatives and to integral cryptanalysis; see \cite{Carlet2025}.

The multiplicative inverse function was studied first in this setting.
Several results on its sum-freedom were obtained in
\cite{CarletInverse2025,CarletHou2025,EbelingHouRydellZhao2026}, and Carlet's
conjecture for the inverse function was recently proved in
\cite{Cheng2026Inverse}. Hou and Zhao \cite{HouZhao2026} also considered
$q$-ary functions and classical power functions, including the Welch
function.

For the Welch function, only dimension three remains to be considered. When
$n\ge5$, the binary weight of $2^m+3=2^m+2+1$ is three, so $W_n$ has algebraic
degree three. Since $W_n$ is APN, it is second-order sum-free. On the other
hand, every derivative of order at least four is zero. Hence the sum of
$W_n$ over every affine subspace of dimension at least four is zero. Hou and
Zhao reduced the third-order case to a rank condition for a Dickson matrix.
They verified by computation that the Welch function is not third-order
sum-free for $7\le n\le15$, and conjectured that the same holds for every odd
$n\ge7$ \cite[Conjecture~5.10]{HouZhao2026}.

Throughout the paper, let
\[
n=2m+1,\qquad q=2^n,\qquad F=\F_q,
\]
let $\Tr=\Tr_{F/\F_2}$ be the absolute trace, and denote by $E\le F$ the $\F_2$-subspace $E$ of $F$. For $1\le k\le n$ and
$b\in F$, define
\[
 N_{n,k}(b)=\#\left\{E\le F:\dim_{\F_2}E=k,
 \ \sum_{x\in E}W_n(x)=b\right\}.
\]
We also use the binary Kloosterman sum
\begin{equation}\label{eq:Kloosterman}
 \Kl_n=\sum_{u\in F^*}(-1)^{\Tr(u+u^{-1})}.
\end{equation}
Here $\gauss nk$ denotes the Gaussian binomial coefficient, which counts the number of $k$-dimensional $\F_2$-subspaces of $F$.

\begin{thm}\label{thm:main}
For every odd $n\ge3$, we have
\[
 N_{n,k}(0)=
 \begin{cases}
  0, & k=1,2,\\
  \dfrac{(q-1)(q+1-3\Kl_n)}{168}, & k=3,\\
  \gauss nk, & 4\le k\le n,
 \end{cases}
\]
and, for every $b\ne0$,
\[
 N_{n,k}(b)=
 \begin{cases}
  1, & k=1,\\
  \dfrac{q-2}{6}, & k=2,\\
  \dfrac{q^2-7q+7+3\Kl_n}{168}, & k=3,\\
  0, & 4\le k\le n.
 \end{cases}
\]
Moreover, if $B_n$ denotes the number of three-dimensional zero-Welch-sum subspaces
of $F$ containing $1$, then
\begin{equation}\label{eq:B}
 B_n=\frac{q+1-3\Kl_n}{24}.
\end{equation}
\end{thm}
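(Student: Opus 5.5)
The plan is to use the homogeneity of $W_n$ together with its low algebraic degree, so that everything except $N_{n,3}(0)$ follows formally. We then reduce $N_{n,3}(0)$ to $B_n$ and compute $B_n$ by a character-sum count.

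\emph{Formal cases.} Since $d=2^m+3$ satisfies $W_n(\lambda x)=\lambda^dW_n(x)$, we have
\[
\sum_{x\in\lambda E}W_n(x)=\lambda^d\sum_{x\in E}W_n(x).
\]
Because $\gcd(d,q-1)=1$ (a known fact for the Welch exponent; in any case $x\mapsto x^d$ is a permutation, being APN with $n$ odd), $\lambda\mapsto\lambda^d$ permutes $F^*$. Hence $F^*$ acts on $k$-subspaces and permutes the nonzero values transitively. It follows that $N_{n,k}(b)$ is independent of $b\neq0$, so
\[
N_{n,k}(b)=\frac{\gauss nk-N_{n,k}(0)}{q-1}.
\]
The individual cases are then:
\begin{itemize}
\item For $k=1$, the sum is $e^d\ne0$, so $N_{n,1}(0)=0$ and $N_{n,1}(b)=1$.
\item For $k=2$, the sum is $D_aD_bW_n(0)$, which is nonzero by the APN property. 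This gives $N_{n,2}(b)=\gauss n2/(q-1)=(q-2)/6$.
\item For $k\ge4$, the sum vanishes since $\deg W_n=3$, as noted in the introduction.
\item For $k=3$, it remains to find $N_{n,3}(0)$. Double count pairs $(E,\lambda)$ with $\lambda\in E\setminus\{0\}$ and $\lambda^{-1}E\ni1$ zero-sum. This gives $7N_{n,3}(0)=(q-1)B_n$. So \eqref{eq:B} yields the $k=3$ formulas, and one checks the arithmetic $\bigl(\gauss n3-N_{n,3}(0)\bigr)/(q-1)=(q^2-7q+7+3\Kl_n)/168$.
\end{itemize}

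\emph{Reduction of $B_n$ to a kernel count.} For $E=\langle 1,a,b\rangle$, the sum over $E$ is the third derivative $D_1D_aD_bW_n(0)$. This equals the symmetric trilinear form attached to the exponents $(2^m,2,1)$ evaluated at $(1,a,b)$:
\[
T(a,b)=a^2b+ab^2+a^{2^m}b+ab^{2^m}+a^{2^m}b^2+a^2b^{2^m}.
\]
For fixed $a\notin\F_2$, the map $L_a:b\mapsto T(a,b)$ is $\F_2$-linear, and its kernel contains $\langle1,a\rangle$. Each $E\ni1$ arises from exactly $(8-2)(8-4)=24$ ordered pairs $(a,b)$. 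Hence
\[
B_n=\frac1{24}\Bigl(\sum_{a\in F}|\ker L_a|-4q\Bigr),
\]
where the terms $a\in\F_2$ (with $|\ker L_a|=q$) must be handled separately and corrected for. Thus I need
\[
S=\#\{(a,b)\in F^2:T(a,b)=0\}.
\]

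\emph{Main step: computing $S$.} Write
\[
S=\frac1q\sum_{c\in F}\sum_{a,b}(-1)^{\Tr(cT(a,b))}.
\]
For fixed $a$ and $c$, the sum over $b$ equals $q\cdot[L_a^*(c)=0]$, where $L_a^*$ is the trace-adjoint. Using $\Tr(c\,a^{2^m}b)=\Tr(c^{2^{m+1}}a^2b^{2^{m+1}})$ and $2^{m+1}\cdot2^m\equiv 2^{n}\cdot\tfrac12$-type identities (here $2(m+1)\equiv 1 \pmod n$), every term can be rewritten as $\Tr(b\,\cdot)$. Therefore $S=\#\{(a,c):L_a^*(c)=0\}$, i.e.\ the number of solutions of an explicit polynomial system in $a$ and $c$.

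Symmetrically, $S$ also counts pairs $(a,b)$ in a form where, for fixed $b$, the equation in $a$ becomes after a substitution (e.g.\ $a\mapsto a/b$, or $u=a+a^2$) a linearized-plus-quadratic relation. Its solution count is expressed through $\sum_u(-1)^{\Tr(u+u^{-1})}$. Concretely, I would aim to show that the number of pairs with $\dim\ker L_a=3$ versus $2$ is governed by whether a certain element $\phi(a)$ satisfies $\Tr(\phi(a)^{-1})=0$, with $\phi$ essentially a degree-two rational map. Summing the indicator $\tfrac12\bigl(1+(-1)^{\Tr(\cdot)}\bigr)$ over $a$ would then produce $\Kl_n$, giving the target
\[
\sum_a|\ker L_a|-4q=q+1-3\Kl_n.
\]

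This last identification, namely showing $\dim\ker L_a\le3$ and pinning the exact condition for dimension $3$ to a Kloosterman-type trace condition, is the main obstacle. I would first try to compose $L_a$ with Frobenius powers to eliminate the $2^m$ exponents, since $(2^m)^2\cdot 2\equiv1$ modulo $2^n-1$, and obtain a low-degree equation in $b$ whose roots are analyzable.
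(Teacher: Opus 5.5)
Your formal cases are correct and match the paper's argument exactly: equidistribution of nonzero sums via $E\mapsto\lambda E$, APN for $k=2$, degree three for $k\ge4$, and $7N_{n,3}(0)=(q-1)B_n$. The gap is the one substantive step, $B_n=(q+1-3\Kl_n)/24$, which you do not prove, and the parts of your plan that are spelled out do not advance it. The adjoint step is vacuous: $\#\{(a,c):L_a^*(c)=0\}=\sum_a|\ker L_a^*|=\sum_a|\ker L_a|$ just restates the count.

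The hoped-for pointwise criterion also does not match what governs $\dim\ker L_a$. For $a\notin\F_2$ put $t=a^2+a$ and $S(t)=\sum_{j=0}^{m-1}t^{2^j}$. Your form is $T(a,b)=tS(u)+uS(t)$ with $u=b^2+b$. So $\ker L_a$ is the preimage under $b\mapsto b^2+b$ of $W=\{u\in\ker\Tr:S(u)=(S(t)/t)\,u\}$. Since $S|_{\ker\Tr}$ is inverted by $G(y)=y^2+y^{2^{m+1}}$, one gets $S(W)=V_c\cap\ker\Tr$, where $c=t/S(t)$ and $V_c=\{y:y^2+y^{2^{m+1}}=cy\}$ has at most four elements. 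This is where $\dim\ker L_a\le3$ comes from. Hence $\dim\ker L_a=3$ if and only if $V_c$ is two-dimensional \emph{and} $V_c\subseteq\ker\Tr$.

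The first condition is a trace condition of the kind you anticipate, namely $\Tr(S(t)^{-1})=1$. On its own, however, it holds for $(q-1-\Kl_n)/2$ values of $a$ ($70$ when $n=7$), whereas the correct count is $6B_n=(q+1-3\Kl_n)/4$ ($42$ when $n=7$). The second condition concerns the other two nonzero elements of $V_c$, which are defined only implicitly, so there is no evident single indicator to sum over $a$. Your bookkeeping is also off. In fact $24B_n=\sum_{a\in F}|\ker L_a|-6q+8$, so the target is $\sum_a|\ker L_a|=7q-7-3\Kl_n$; for $n=3$ every kernel is $F$ and the sum is $64$.

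The missing idea is to count globally over the fibres $V_c$ rather than pointwise in $a$.
First, every $u\in F^*\setminus\{1\}$ lies in exactly one $V_c$ with $c\ne0$, namely $c=u+u^{2^{m+1}-1}$.
Second, for $0\ne u\in V_c$ one has $\dim V_c=2\iff\Tr(u^{-1})=1$. The paper proves this by mapping $V_c\setminus\{0\}$ bijectively, via $y\mapsto y^{2^{m+1}-1}$, onto the roots of $z^{2^{m+1}+1}+z=c$. It then reduces to the Artin--Schreier-type equation $w^{2^{m+1}}+w=1+u^{-2^{m+1}}$.
Consequently the two-dimensional $V_c$ partition $\{u\in F^*:\Tr(u^{-1})=1\}\setminus\{1\}$, so there are $M_n=(q-2)/6$ of them.

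Now count $A_n=\#\{u\in F^*:\Tr(u)=0,\ \Tr(u^{-1})=1\}$ fibre by fibre. The two-dimensional fibres lying in $\ker\Tr$ correspond bijectively to the zero-sum $H\ni1$, and each contributes three elements. The remaining $M_n-B_n$ fibres contribute one element each. This gives $A_n=M_n+2B_n$, while a direct character sum gives $A_n=(q-1-\Kl_n)/4$. Your outline lacks the spaces $V_c$, the inverse-trace criterion, and this weighted double count. Without them it has no mechanism that actually produces $\Kl_n$.
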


Clearly, Theorem~\ref{thm:main} gives
\[
 N_{n,3}(0)\sim 2^{-n}\gauss n3\qquad(n\to\infty,\ n\text{ odd}).
\]
Thus the zero value has the asymptotic frequency $2^{-n}$. We also obtain the
following consequence.

\begin{cor}\label{cor:HZ}
For every odd $n\ge7$, the Welch function on $\F_{2^n}$ is not third-order
sum-free.
\end{cor}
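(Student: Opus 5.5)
The statement immediately above is Corollary~\ref{cor:HZ}, and I would deduce it directly from Theorem~\ref{thm:main}. A $3$-dimensional linear subspace $E$ with $\sum_{x\in E}W_n(x)=0$ is in particular a $3$-dimensional affine subspace on which the Welch sum vanishes. So it suffices to show that $N_{n,3}(0)>0$ for every odd $n\ge7$.

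By Theorem~\ref{thm:main},
\[
N_{n,3}(0)=\frac{(q-1)(q+1-3\Kl_n)}{168}.
\]
Since $q-1>0$, the task reduces to the inequality $q+1-3\Kl_n>0$. For this I would invoke the Weil bound for Kloosterman sums. Over $F^*$ it reads
\[
\Bigl|\sum_{u\in F^*}(-1)^{\Tr(u+u^{-1})}\Bigr|\le 2\sqrt q,
\]
that is, $|\Kl_n|\le 2^{n/2+1}$. Then
\[
q+1-3\Kl_n\ge q+1-6\sqrt q=(\sqrt q-3)^2-8,
\]
which is positive as soon as $\sqrt q>3+2\sqrt2\approx5.83$, i.e.\ $q>34$. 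For odd $n\ge7$ we have $q\ge128$, so the inequality holds, $N_{n,3}(0)>0$, and $W_n$ is not third-order sum-free.

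I expect no real obstacle here; the one point to get right is the normalization of the Weil bound for the sum as defined in \eqref{eq:Kloosterman}. That sum runs over $F^*$ only. The full character sum over $F$ would add the term $1$ from $u=0$ (interpreting $0^{-1}=0$), and the Weil bound $2\sqrt q$ is stated for the version over $F^*$. Even if one used the cruder estimate $|\Kl_n|\le 2\sqrt q+1$, the margin at $q=128$ is ample: $128+1-3(2\sqrt{128}+1)\approx 129-70.9>0$.

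Finally, the threshold $n\ge7$ is consistent with the known small cases. For $n=5$ the value $q+1-3\Kl_5$ may vanish, and this does not conflict with the corollary, since the corollary only claims $n\ge7$.
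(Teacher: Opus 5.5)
Your proposal is correct and is essentially the paper's own proof: both plug the formula $N_{n,3}(0)=(q-1)(q+1-3\Kl_n)/168$ from Theorem~\ref{thm:main} into the Weil bound $|\Kl_n|\le 2\sqrt q$ for the sum over $F^*$, which gives $N_{n,3}(0)>0$ for $q\ge128$. Your normalization remark and the observation that $q+1-3\Kl_5=0$ also agree with the paper, which recovers the same bound from the elliptic-curve identity $\Kl_n=-(\alpha^n+\beta^n)$ in Section~4.
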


For Theorem~\ref{thm:main}, the cases $k=1,2$ and $k\ge4$ follow immediately from the permutation property of the Welch exponent, the APN property of $W_n$, and the fact that $W_n$ has algebraic degree three, respectively. Thus it remains to consider the case $k=3$. For this case, we use the map $\phi(x)=x^2+x$ and a partial trace map to reduce the problem to the study of certain two-dimensional subspaces of the trace-zero hyperplane; see Section~2. This reduction leads to a criterion involving $\Tr(u^{-1})$. A counting argument based on this criterion then gives the formulas in Theorem~\ref{thm:main}; see Section~3.  Finally, in
Section~4, we relate the Kloosterman sum $\Kl_n$ to a fixed elliptic curve,
derive a recurrence for $\Kl_n$, and recover the Dickson determinant
computations of Hou and Zhao.

\section{A partial trace map on the trace-zero hyperplane}

First of all, we put
\[
 T=\ker\Tr,
 \qquad
 S(t)=\sum_{j=0}^{m-1}t^{2^j},
 \qquad
 r=2^{m+1},
 \qquad
 G(y)=y^2+y^r.
\]
Since $n=2m+1$, one has
\begin{equation}\label{eq:rproperties}
z^{r^2}=z^2\quad(z\in F),\ \gcd(m+1,n)=1,\ (r-1)(r+1)\equiv1\pmod{q-1}.
\end{equation}
Hence the power maps $z\mapsto z^{r-1}$ and $z\mapsto z^{r+1}$ are inverse permutations of $F^*$.

\begin{lem}\label{lem:inverse}
The restrictions of $S$ and $G$ to $T$ are mutually inverse
$\F_2$-linear automorphisms of $T$.
\end{lem}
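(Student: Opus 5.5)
The approach is a direct computation: verify that $G\circ S$ is the identity on $T$, and check that both maps send $T$ into $T$. A finite-dimensionality argument then gives the rest. Both $S$ and $G$ are sums of Frobenius powers $z\mapsto z^{2^j}$, so they are $\F_2$-linear on all of $F$, and linearity needs no further comment.

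\emph{Both maps preserve $T$.} For any $y\in F$ we have $\Tr(G(y))=\Tr(y^2)+\Tr(y^r)=2\Tr(y)=0$, because the trace is invariant under Frobenius. Hence $G(F)\subseteq T$, and in particular $G(T)\subseteq T$. For $t\in T$ we get $\Tr(S(t))=\sum_{j=0}^{m-1}\Tr(t^{2^j})=m\Tr(t)=0$, so $S(T)\subseteq T$.

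\emph{The key identity $G(S(t))=t$ for $t\in T$.} First, $S(t)^2=\sum_{j=1}^{m}t^{2^j}$. Next, since $r=2^{m+1}$, we have $S(t)^r=\sum_{j=m+1}^{2m}t^{2^j}$, and no exponent reduction is needed because $2m<n$. Adding the two sums gives
\[
G(S(t))=\sum_{j=1}^{2m}t^{2^j}=\Tr(t)+t,
\]
since $n=2m+1$ and the full trace is $\sum_{j=0}^{2m}t^{2^j}$. For $t\in T$ this equals $t$.

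\emph{Conclusion.} The identity shows that $S|_T\colon T\to T$ is injective. Since $T$ is a finite-dimensional $\F_2$-space, $S|_T$ is therefore bijective, and $G|_T=(S|_T)^{-1}$. It follows that both restrictions are mutually inverse automorphisms of $T$.

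The only step requiring care is the index bookkeeping in the key identity. One must check that the two ranges $1,\dots,m$ and $m+1,\dots,2m$ tile $\{1,\dots,2m\}$ without overlap. Together with the missing $j=0$ term, this reproduces the trace exactly.
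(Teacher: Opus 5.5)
Your proof is correct and follows the paper's argument essentially verbatim: check that $S$ and $G$ preserve $T$, compute $G(S(t))=t+\Tr(t)$, and conclude by finite-dimensionality. Your explicit step that $G\circ S=\mathrm{id}_T$ makes $S|_T$ injective, hence bijective with $G|_T=(S|_T)^{-1}$, just spells out the paper's one-line conclusion.
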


\begin{proof}
Let $t, y\in T$. One has 
\[
\Tr(S(t))=\Tr\left(\sum_{i=0}^{m-1}t^{2^i}\right)=\sum_{i=0}^{m-1}\Tr(t^{2^i})=m\Tr(t)=0,
\]
and
$$\Tr(G(y))=\Tr(y^2)+\Tr(y^r)=0.$$
It implies that both $S(t)$ and $G(y)$ are in $T$. Thus, both maps preserve $T$. Furthermore,
for every $t\in F$,
\[
 G(S(t))
=S(t)^2+S(t)^r
=\sum_{j=1}^{m}t^{2^j}+\sum_{j=m+1}^{2m}t^{2^j}
 =t+\Tr(t).
\]
It follows that $G\circ S$ is the identity on $T$. In addition, both maps are endomorphisms of
the finite vector space $T$, so they are inverse automorphisms.
\end{proof}

Now we define an $\F_2$-linear map $\phi$ from $F$ to itself by $\phi(x)=x^2+x$. Clearly,
$\ker\phi=\F_2=\langle1\rangle$ and $\operatorname{im}\phi=T$.

\begin{lem}\label{lem:phi-bijection}
The map
\[
H\longmapsto\phi(H)
\]
is a bijection from the three-dimensional subspaces of $F$ containing $1$
to the two-dimensional subspaces of $T$, where $\phi(H)$ denotes the image of the restriction of $\phi$ to $H$.
\end{lem}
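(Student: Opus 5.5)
The plan is to treat this as the standard correspondence between subspaces containing the kernel of a linear map and subspaces of its image. Here $\phi\colon F\to F$ is $\F_2$-linear with $\ker\phi=\langle1\rangle$ and $\operatorname{im}\phi=T$.

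First, I check that the map is well defined. Let $H\le F$ be three-dimensional with $1\in H$. The restriction $\phi|_H$ has kernel $H\cap\ker\phi=\langle1\rangle$, which is one-dimensional. By rank–nullity, $\phi(H)$ is therefore a two-dimensional subspace, and it lies in $\operatorname{im}\phi=T$.

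Next, I construct the inverse. Given a two-dimensional subspace $U\le T$, set $H=\phi^{-1}(U)=\{x\in F:\phi(x)\in U\}$. This is an $\F_2$-subspace containing $\ker\phi=\langle1\rangle$. Since $U\subseteq T=\operatorname{im}\phi$, the map $\phi$ sends $H$ onto $U$, so $\dim H=\dim U+\dim\ker\phi=3$.

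Finally, I verify that the two constructions are mutually inverse. Since $U\subseteq\operatorname{im}\phi$, we have $\phi(\phi^{-1}(U))=U$. In the other direction, we always have $H\subseteq\phi^{-1}(\phi(H))$. When $1\in H$, any $x$ with $\phi(x)=\phi(h)$ for some $h\in H$ satisfies $x+h\in\ker\phi=\{0,1\}\subseteq H$, so $x\in H$; hence $\phi^{-1}(\phi(H))=H$.

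There is no real obstacle here. The only point needing care is using $1\in H$ to make the preimage step exact, together with the surjectivity of $\phi$ onto $T$. The latter is already recorded in the paper: $\phi$ maps into $T$ because $\Tr(x^2+x)=0$, and a dimension count gives $\dim\operatorname{im}\phi=n-1=\dim T$.
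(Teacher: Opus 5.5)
Your proposal is correct and follows the same route as the paper. Both arguments use rank--nullity for $\phi|_H$ with $\ker\phi=\langle1\rangle\subseteq H$, and both take the preimage $\phi^{-1}(U)$, of dimension $\dim U+1=3$ because $\operatorname{im}\phi=T$. Your explicit checks that $\phi(\phi^{-1}(U))=U$ and $\phi^{-1}(\phi(H))=H$ spell out the step the paper only asserts (``these two constructions are inverse to each other'').
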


\begin{proof}
Let $H$ be a three-dimensional subspace of $F$ containing $1$. Since
$\ker\phi=\langle1\rangle\subset H$, the rank-nullity theorem gives
\[
\dim_{\F_2}\phi(H)
=
\dim_{\F_2}H-\dim_{\F_2}\ker(\phi|_H)
=
3-1=2.
\]
Moreover, $\phi(H)\subset T$.

Conversely, let $V$ be a two-dimensional subspace of $T$. Since
$\operatorname{im}\phi=T$, the preimage
$H=\phi^{-1}(V)$
is a subspace of $F$ containing $\ker\phi=\langle1\rangle$, and
\[
\dim_{\F_2}H
=
\dim_{\F_2}V+\dim_{\F_2}\ker\phi
=
2+1=3.
\]
Thus $H$ is a three-dimensional subspace of $F$ containing $1$, and
$\phi(H)=V$. These two constructions are inverse to each other. This proves Lemma \ref{lem:phi-bijection}.
\end{proof}

Let $H$ be a three-dimensional subspace of $F$ containing $1$. We write $H=\langle1,x,y\rangle_{\F_2}$ and put
$t=x^2+x$, $u=y^2+y$. By a direct expansion, one obtains
\begin{align}
 \sum_{z\in H}W_n(z)
 = (x^2+x)(y^{2^m}+y)+(y^2+y)(x^{2^m}+x)=tS(u)+uS(t),\label{eq:polarization}
\end{align}
where we used $S(x^2+x)=x^{2^m}+x$.

For $a\in F$, define the binary linear subspace
\begin{equation*}
 V_a=\{y\in F:G(y)=ay\}
     =\{y\in F:y^2+y^r+ay=0\}.
\end{equation*}

\begin{lem}\label{lem:dimVa}
For every $a\in F$, one has $\dim_{\F_2}V_a\le2$.  Moreover, $V_0=\F_2$.
\end{lem}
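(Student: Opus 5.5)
The plan is to rewrite the defining equation of $V_a$ as a linearized equation with respect to a generator of $\operatorname{Gal}(F/\F_2)$ other than the usual Frobenius. Once the equation has degree two in that generator, the bound $\dim_{\F_2}V_a\le 2$ follows from a Moore-determinant argument.

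Let $\sigma(z)=z^{r}=z^{2^{m+1}}$. By \eqref{eq:rproperties}, $\gcd(m+1,n)=1$, so $\sigma$ generates $\operatorname{Gal}(F/\F_2)$ and its fixed field is $\F_2$. Also by \eqref{eq:rproperties}, $\sigma^2(z)=z^{r^2}=z^2$. Hence
\[
V_a=\{y\in F:\ \sigma^2(y)+\sigma(y)+ay=0\},
\]
which is the kernel of a $\sigma$-linearized polynomial of $\sigma$-degree $2$. Clearly $V_a$ is an $\F_2$-subspace.

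For the dimension bound, suppose $y_1,y_2,y_3\in V_a$ are $\F_2$-linearly independent. Consider the Moore-type matrix $M=\bigl(\sigma^{j}(y_i)\bigr)_{1\le i\le 3,\,0\le j\le 2}$. The defining equation says that $M(a,1,1)^{T}=0$, so $\det M=0$. I will then invoke the standard fact, Artin's lemma in Moore-determinant form, that for a generator $\sigma$ of a cyclic Galois group $\operatorname{Gal}(F/K)$ and $K$-linearly independent $y_1,\dots,y_k$, the matrix $\bigl(\sigma^j(y_i)\bigr)_{i,\,0\le j<k}$ is nonsingular. This contradiction gives $\dim_{\F_2}V_a\le 2$.

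To keep things self-contained, I would prove the needed case by induction. If $\det M=0$, take a nonzero vector $(c_1,c_2,c_3)$ in the left kernel of $M$, normalized so that some $c_i=1$ and with minimal support. Applying $\sigma$ to the relations $\sum_i c_i\sigma^j(y_i)=0$ for $j=0,1$ and subtracting gives a relation with coefficients $\sigma(c_i)-c_i$ and smaller support. Minimality then forces every $c_i$ into the fixed field $\F_2$, contradicting independence. This is the only step with real content, and I expect it to be the main obstacle; it is routine once phrased as ``$\sigma$-linearized of degree $d$ implies kernel of dimension at most $d$ over the fixed field of $\sigma$.''

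For $V_0$, the equation $y^r=y^2$ gives $(y^{2^m})^2=y^2$, hence $y^{2^m}=y$ since squaring is injective. So $y\in\F_{2^m}\cap F=\F_{2^{\gcd(m,n)}}=\F_2$, because $\gcd(m,2m+1)=1$. Conversely, $0$ and $1$ obviously satisfy $y^2+y^r=0$, so $V_0=\F_2$.
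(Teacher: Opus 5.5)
Your main line is correct, since the Casoratian (Moore-type) nonvanishing lemma you cite is true. It is, however, a genuinely different route from the paper's.

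\textbf{Comparison with the paper.} The paper avoids independence arguments entirely. It raises $y^r=y^2+ay$ to the $r$th power and uses $y^{r^2}=y^2$ to eliminate $y^r$. This gives the ordinary polynomial equation $y^4+(a^2+a^r+1)y^2+a^{r+1}y=0$. A monic polynomial of degree $4$ has at most four roots, so $\#V_a\le 4$. For $a=0$ the equation reads $y^4+y^2=0$, which gives $V_0=\F_2$ immediately.

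Your observation $\sigma^2(y)=y^2$ is exactly what makes this elimination work, so the short elementary argument was within reach. Your approach buys a general principle: a $\sigma$-linearized operator of $\sigma$-degree $d$ with nonzero leading coefficient has kernel of dimension at most $d$ over the fixed field of $\sigma$. The cost is reliance on that lemma. Your separate treatment of $V_0$ via $y^{2^m}=y$ and $\gcd(m,n)=1$ is correct.

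\textbf{The self-contained sketch does not close as written.} Suppose $\sum_i c_i\sigma^j(y_i)=0$ for $j=0,1,2$. Applying $\sigma$ to the relations with $j=0,1$ and subtracting yields $\sum_i(\sigma(c_i)-c_i)\sigma^j(y_i)=0$ only for $j=1,2$. One relation is lost in the shift, so this vector is not in the left kernel of $M$, and your minimality hypothesis says nothing about it.

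Here the defining equation repairs the argument. Set $e_i=\sigma^{-1}\bigl(\sigma(c_i)-c_i\bigr)$. Applying $\sigma^{-1}$ to the two surviving relations gives $\sum_i e_iy_i=\sum_i e_i\sigma(y_i)=0$. Since $y_i\in V_a$, you also get $\sum_i e_i\sigma^2(y_i)=\sum_i e_i\bigl(\sigma(y_i)+ay_i\bigr)=0$. So $e$ lies in the left kernel with strictly smaller support, because it vanishes where $c_i=1$. Minimality gives $e=0$, so all $c_i\in\F_2$, and $\sum_i c_iy_i=0$ contradicts independence.

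In this form you do not even need $\det M=0$. A nonzero $c$ killing the two columns $j=0,1$ exists by counting dimensions, and the third relation is automatic. For the general lemma, where the $y_i$ satisfy no equation, the lost relation is handled instead by an honest induction on $k$ using the $(k-1)\times(k-1)$ Casoratian.
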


\begin{proof}
If $y\in V_a$, then $y^r=y^2+ay$.  Applying the $r$th power and using
\eqref{eq:rproperties}, one obtains
\[
 y^4+(a^2+a^r+1)y^2+a^{r+1}y=0.
\]
Therefore $V_a$ has at most four elements. It implies $\dim_{\F_2}V_a\le2$. When $a=0$, the displayed equation
reduces to $y^4+y^2=0$, and hence $V_0=\F_2$.
\end{proof}

\begin{prop}\label{prop:B-kernels}
Let $B_n$ be the number of three-dimensional $\F_2$-subspaces of $F$ containing $1$, with zero Welch sum. Then $B_n$ equals the number of $a\in F^*$ for which
$V_a$ has dimension two and is contained in $T$.
\end{prop}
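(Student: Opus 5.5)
The plan is to compose the bijection of Lemma~\ref{lem:phi-bijection} with the automorphism $S|_T$ of Lemma~\ref{lem:inverse}. This gives a bijection between the three-dimensional subspaces $H\ni1$ and the two-dimensional subspaces $V'\le T$. We then read off the zero-Welch-sum condition from \eqref{eq:polarization} as the statement that $V'$ is an eigenspace of $G$, i.e.\ $V'=V_a$ for some $a$.

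\emph{The sum condition in terms of $V'$.} Let $H\ni 1$ be three-dimensional, and let $V=\phi(H)=\langle t,u\rangle\le T$, which is two-dimensional by Lemma~\ref{lem:phi-bijection}. Put $t'=S(t)$, $u'=S(u)$ and $V'=S(V)=\langle t',u'\rangle$. By Lemma~\ref{lem:inverse}, $V'$ is again a two-dimensional subspace of $T$, $t=G(t')$, $u=G(u')$, and $V\mapsto S(V)$ permutes the two-dimensional subspaces of $T$. Formula \eqref{eq:polarization} then becomes
\[
\sum_{z\in H}W_n(z)=G(t')\,u'+G(u')\,t'.
\]
The left-hand side depends only on $H$, so we may use any basis of $V$.

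\emph{From $H$ to $a$.} Suppose the sum vanishes. Since $t'\neq0$, set $a=G(t')/t'$. The vanishing gives $G(u')=\bigl(G(t')/t'\bigr)u'=a u'$, so both $t'$ and $u'$ lie in $V_a$. Since $G$ is $\F_2$-linear, $V'\subseteq V_a$. Lemma~\ref{lem:dimVa} gives $\dim V_a\le 2$, so $V_a=V'$, which has dimension two and is contained in $T$. Also $a\ne0$, because $V_0=\F_2$ has dimension one.

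\emph{From $a$ back to $H$.} Let $a\in F^*$ with $\dim V_a=2$ and $V_a\subseteq T$, and pick a basis $t',u'$ of $V_a$. Then $G(t')u'+G(u')t'=at'u'+au't'=0$. So the subspace $H=\phi^{-1}(G(V_a))$ has zero Welch sum, and it is three-dimensional and contains $1$.

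\emph{Bijectivity.} The two constructions are mutually inverse. The value $a$ is determined by $V'$, since $a=G(y)/y$ for any nonzero $y\in V'$, and $V'=V_a$. Hence distinct admissible $a$ give distinct subspaces, and $H\mapsto a$ is a bijection. The only delicate point is the dimension bound forcing $V'=V_a$, and Lemma~\ref{lem:dimVa} supplies it.
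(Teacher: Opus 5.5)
Your proof is correct and follows essentially the same route as the paper: pass from $H$ to $V=\phi(H)$ and then to $S(V)$, and use Lemma~\ref{lem:dimVa} to force $S(V)=V_a$. The only difference is cosmetic: you substitute $t=G(t')$ and $u=G(u')$ into \eqref{eq:polarization} to read off the $G$-eigenvalue $a$ directly, whereas the paper first obtains $S(v)=sv$ on $V$ and then sets $a=s^{-1}$.
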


\begin{proof}
Let $H$ be a three-dimensional subspace of $F$ containing $1$, with zero Welch sum. Let $V=\phi(H)$. By Lemma \ref{lem:phi-bijection}, we know that $V$ is a two-dimensional subspace of $T$. Now we write $V=\langle t,u\rangle_{\F_2}$. Note that $H$ has zero Welch sum. It follows from \eqref{eq:polarization} that
\[
\frac{S(t)}{t}=\frac{S(u)}{u}=s
\]
for some $s\in F^*$. By the linearity of $S$, we then have
$S(v)=sv$
for every $v\in V$. The scalar $s$ is uniquely determined by $V$, since
$V$ contains a nonzero element. Now put $U=S(V)$. Since $V\subset T$ and $S$ is an automorphism of $T$, the
space $U$ is a two-dimensional subspace of $T$. For every $y\in U$, there
is some $y'\in V$ such that
$y=S(y')=sy'$.
Let $a=s^{-1}$. By Lemma~\ref{lem:inverse}, we deduce that
\[
G(y)=G(S(y'))=y'=ay.
\]
Hence $U\subseteq V_a$. Since $U$ has dimension two,
Lemma~\ref{lem:dimVa} gives $U=V_a$. Thus $H$ determines a unique
$a\in F^*$ such that $\dim_{\F_2}V_a=2$ and $V_a\subset T$.

Conversely, suppose that $a\ne0$, $\dim_{\F_2}V_a=2$, and $V_a\subset T$. Put $V=G(V_a)$.
Since $G(y)=ay$ for every $y\in V_a$, we have
 $V=aV_a$.
Hence $V$ is a two-dimensional subspace of $T$. Let $v\in V$. Then $v=G(y)$ for some $y\in V_a$, and therefore
$v=ay$. By Lemma~\ref{lem:inverse},
\[
S(v)=S(G(y))=y=a^{-1}v.
\]
Thus $S(v)=a^{-1}v$for every $v\in V$. Now choose a basis $V=\langle t,u\rangle_{\F_2}$. Since
$H=\phi^{-1}(V)$, choose $x,y\in H$ such that
$\phi(x)=t$ and $\phi(y)=u$.
Then $H=\langle1,x,y\rangle_{\F_2}$. By
Lemma~\ref{lem:phi-bijection}, the preimage $H=\phi^{-1}(V)$
is a three-dimensional subspace of $F$ containing $1$. Moreover, by
\eqref{eq:polarization}, we have
\[
\sum_{x\in H}W_n(x)
=tS(u)+uS(t)
=a^{-1}tu+a^{-1}ut
=0.
\]
Hence $H$ has zero Welch sum.

Therefore, the two constructions are inverse to each other. This proves Proposition \ref{prop:B-kernels}.
\end{proof}

\section{The inverse-trace criterion and counting}

We shall use the following lemma.

\begin{lem}\label{lem:tracecriterion}
Let $a\in F^*$ and $u\in V_a\setminus\{0\}$. Then we have
\begin{equation*}
\dim_{\F_2}V_a=2
 \quad\Longleftrightarrow\quad
 \Tr(u^{-1})=1.
\end{equation*}
\end{lem}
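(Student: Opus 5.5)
The plan is to normalize by the given nonzero element $u\in V_a$ and reduce the dimension question to the solvability of a quadratic equation. Since $G(u)=au$ and $u\neq0$, we have $a=u+u^{r-1}$, so $a$ is determined by $u$. Every further element of $V_a$ has the form $y=uz$ with $z\notin\F_2$. Substituting into $y^2+y^r+ay=0$ and using $a=u+u^{r-1}$ turns the defining equation into
\[
 u^2(z^2+z)=u^r(z^r+z).
\]
So $\dim_{\F_2}V_a=2$ if and only if this equation has a solution $z\in F\setminus\F_2$. By Lemma~\ref{lem:dimVa} such a $z$ is then unique up to $z\mapsto z+1$.

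Next I would use the quartic from the proof of Lemma~\ref{lem:dimVa}. Put $M(y)=y^2+y^r+ay$ and $Q(y)=y^4+(a^2+a^r+1)y^2+a^{r+1}y$. Using $y^{r^2}=y^2$, a direct computation gives the identity
\[
 Q=M^2+M^r+a^rM .
\]
Hence $\ker Q=M^{-1}(V_{a^r})$. Dividing $Q(uz)$ by $u^4$ and using $Q(u)=0$ gives the factorization
\[
 (z^2+z)(z^2+z+\gamma), \qquad \gamma=1+\frac{a^2+a^r+1}{u^2}.
\]
With $a=u+u^{r-1}$ and $u^{r^2}=u^2$ one gets $\gamma=u^{2r-4}+u^{r-2}+u^{-r}+u^{-2}$, which has trace zero. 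So $\ker Q$ is always two-dimensional, spanned by $u$ and $uz_0$ with $z_0^2+z_0=\gamma$. Therefore $V_a$ is two-dimensional exactly when $M(uz_0)=0$. This is equivalent to $V_{a^r}\cap M(\ker Q)=0$, since $M(\ker Q)$ is at most one-dimensional, generated by $M(uz_0)$.

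The decisive step is to evaluate $M(uz_0)$ and decide when it vanishes. Since $uz_0\in\ker Q$, the value $v=M(uz_0)$ lies in $V_{a^r}$. Then $v$ is either $0$ or a specific element forced by the equation $v^2+v^r+a^rv=0$. I would compute $M(uz_0)=u^2(z_0^2+z_0)+u^r(z_0^r+z_0)$. For the second term I would write $z_0^r+z_0=\sum_{j=0}^{m}\gamma^{2^j}$, which is expressible through $S$ and $\gamma$. The aim is to show that $M(uz_0)$ equals $u^r$ times $\Tr(u^{-1})+1$, or an expression of that shape, regarded as an element of $\F_2$. This would use the relation $(r-1)(r+1)\equiv1$ from \eqref{eq:rproperties} together with the trace cancellations already seen in $\Tr(\gamma)$.

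I expect this evaluation to be the main obstacle, because the $2^{m+1}$-power interacts awkwardly with the partial sums $S$. If the bookkeeping becomes unwieldy, the fallback is to work in the variable $w=z^2+z\in T$, which is nonzero. There the condition becomes $\sum_{j=0}^{m}w^{2^j}=u^{2-r}w$. I would then invert this relation using Lemma~\ref{lem:inverse}, that is, apply $G$ to both sides, to obtain a linear equation for $w$ of the form $w=\lambda\cdot(\text{known element})$. Its solvability in $T\setminus\{0\}$ should reduce to a trace condition that simplifies to $\Tr(u^{-1})=1$.
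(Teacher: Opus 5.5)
Your reduction is correct: $a=u+u^{r-1}$, the identity $Q=M^2+M^r+a^rM$, the factorization $Q(uz)=u^4(z^2+z)(z^2+z+\gamma)$, the formula for $\gamma$, and $\Tr(\gamma)=0$ all check out. Since $V_a\subseteq\ker Q=\langle u,uz_0\rangle$, it follows that $\dim_{\F_2}V_a=2\iff M(uz_0)=0$.

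\textbf{The gap.} The proposal stops at the step that carries both directions of the lemma. You only say that $M(uz_0)$ should be $u^r(\Tr(u^{-1})+1)$ ``or an expression of that shape''. As written this is a gap, but it closes in a few lines along the route you hint at. The two pairs that cancel in $\Tr(\gamma)$ mean that $\gamma=\phi(x)+G(y)$ with $x=u^{r-2}$ and $y=u^{-1}$. Telescoping gives $\sum_{j=0}^{m}\phi(x)^{2^j}=x^r+x$. Also $\sum_{j=0}^{m}G(y)^{2^j}=\Tr(y)+y^r$: the index ranges $1,\dots,m+1$ and $m+1,\dots,2m+1$ overlap only in $m+1$, and together they cover every residue mod $n$ except $m+1$. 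Using $u^{r^2}=u^2$ this yields
\[
 z_0^r+z_0=\sum_{j=0}^{m}\gamma^{2^j}=u^{2-2r}+u^{r-2}+u^{-r}+\Tr(u^{-1}),
 \qquad
 u^{2-r}\gamma=u^{r-2}+1+u^{2-2r}+u^{-r}.
\]
Hence $M(uz_0)=u^r\bigl(u^{2-r}\gamma+z_0^r+z_0\bigr)=u^r\bigl(1+\Tr(u^{-1})\bigr)$, which vanishes exactly when $\Tr(u^{-1})=1$.

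You also need $\gamma\neq0$, to ensure $uz_0\notin\{0,u\}$. Comparing linear coefficients in $Q(uz)/u^4$ gives $\gamma=a^{r+1}u^{-3}\neq0$.

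\textbf{The fallback.} It would not work as described. Applying $G$ to $\sum_{j=0}^{m}w^{2^j}=u^{2-r}w$ gives $w^r(1+u^{2r-2})=u^{4-2r}w^2$, which is a binomial equation, not a linear one. Moreover $G$ has kernel $\F_2$ on $F$, so this only recovers the original equation up to adding $1$. Once the computation above is in place, the fallback is not needed.

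\textbf{Comparison with the paper.} With that computation supplied, your argument is a complete proof by a different route. The paper sends $V_a\setminus\{0\}$ by the power map $y\mapsto y^{r-1}$ onto the roots of $z^{r+1}+z=a$; this is where $(r-1)(r+1)\equiv1\pmod{q-1}$ is used. Two elements of $V_a$ differing by $u$ then give the inhomogeneous equation $w^r+w=1+u^{-r}$. Necessity is a one-line trace computation. Sufficiency needs a separate construction of $c_0$ from the surjectivity of $c\mapsto c^r+c$ onto $T$, plus a check that $uc_0$ and $uc_0+u$ are roots.

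Your route stays within linearized polynomials. Because $\Tr(\gamma)=0$, the quartic of Lemma~\ref{lem:dimVa} always has its whole kernel $\langle u,uz_0\rangle$ in $F$. A single evaluation of $M(uz_0)$ then settles both directions at once. As a by-product, $\ker Q$ is two-dimensional whenever $V_a\neq0$ and $a\neq0$. The cost is the explicit telescoping computation, which the paper's Artin--Schreier formulation avoids.
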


\begin{proof}
Let $P(z)=z^{r+1}+z$. By \eqref{eq:rproperties}, the map
\[
V_a\setminus\{0\}\longrightarrow \{z\in F:P(z)=a\},
\qquad
y\longmapsto y^{r-1},
\]
is a bijection, whose inverse is
$z\longmapsto z^{r+1}$. Indeed, for every $y\in V_a\setminus\{0\}$, $G(y)=y^2+y^r=ay$. It implies $y^{r-1}=a+y$. So, we have
$$P(y^{r-1})=(y^{r-1})^{r+1}+y^{r-1}=y+a+y=a.$$  Conversely, for $z\in F$ satisfying $P(z)=a\ne0$ and
$y=z^{r+1}$, one has $y^{r-1}=z$ and $a=z+z^{r+1}=y^{r-1}+y$. It implies that $G(y)=y^2+y^{r}=y(y+y^{r-1})=ay$, and then $y\in V_a$. In particular,
\begin{equation}\label{eq:zay}
 z=a+y.
\end{equation}

Assume first that $\dim_{\F_2}V_a=2$. Choose an element
$v\in V_a\setminus\{0,u\}$. By \eqref{eq:zay}, we can write $z_0$ and $z_0+u$ for the two roots of $P(z)=a$ corresponding to
$v$ and $v+u$ differ by $u$. Now let $w=z_0u^{-1}$.
Then from $P(z_0+u)=P(z_0)$, we obtain
\begin{equation}\label{eq:AS}
 w^r+w=1+u^{-r}.
\end{equation}
Taking traces in \eqref{eq:AS}, we get
\[
 0=\Tr(1+u^{-r})=1+\Tr(u^{-1}),
\]
where we used the invariance of the absolute trace under Frobenius powers
and the fact that $n$ is odd. Hence $\Tr(u^{-1})=1$.

Conversely, suppose that $\Tr(u^{-1})=1$. Consider the $\F_2$-linear map
\[
L(c)=c^r+c,\qquad c\in F.
\]
Since $\gcd(m+1,n)=1$, we have $\ker L=\F_2$. Moreover,
\[
\Tr(L(c))=\Tr(c^r)+\Tr(c)=0,
\]
so $\operatorname{im}L\subseteq T$. Since both $\operatorname{im}L$ and
$T$ have dimension $n-1$, it follows that
$\operatorname{im}L=T$.
On the other hand,
\[
\Tr(1+u^{-r})
=1+\Tr(u^{-1})
=0.
\]
Thus $1+u^{-r}\in T=\operatorname{im}L$, and hence there exists
$c_0\in F$ such that
\begin{equation}\label{eq:AS-converse}
c_0^r+c_0=1+u^{-r}.
\end{equation}

Now raising \eqref{eq:AS-converse} to the $r$th power and adding it to
\eqref{eq:AS-converse}, we obtain
\[
c_0^2+c_0=u^{-r}+u^{-2}.
\]
Also,
\[
c_0^r=c_0+1+u^{-r}.
\]
Therefore, we compute that
\begin{align*}
P(uc_0)
&=u^{r+1}c_0^{r+1}+uc_0\\
&=u^{r+1}c_0(c_0+1+u^{-r})+uc_0\\
&=u^{r+1}(c_0^2+c_0)+uc_0+uc_0\\
&=u^{r+1}(u^{-r}+u^{-2})\\
&=u+u^{r-1}.
\end{align*}
Since $u\in V_a\setminus\{0\}$, we have $a=u+u^{r-1}$.
Hence $P(uc_0)=a$. Then by \eqref{eq:AS-converse}, one deduces that
\begin{align*}
P(uc_0+u)+P(uc_0)
&=(uc_0+u)^{r+1}+uc_0+u+a\\
&=(uc_0)^{r+1}+uc_0+a\\
&=P(uc_0)+a\\
&=0.
\end{align*}
Thus $P(uc_0+u)=P(uc_0)=a$. Now
by \eqref{eq:zay}, these two roots correspond to
two nonzero elements $y_1=a+uc_0, y_2=a+uc_0+u$
of $V_a$. Since $y_1+y_2=u\ne0$,
$y_1$ and $y_2$ are linearly independent over
$\F_2$, and consequently
$\dim_{\F_2}V_a\ge2$. This together with Lemma~\ref{lem:dimVa}  concludes that $\dim_{\F_2}V_a=2$. Lemma \ref{lem:tracecriterion} is proved. 
\end{proof}

Now we focus on the counting. Let
\[
 \mathcal A=\{a\in F^*: \dim_{\F_2}V_a=2\},
 \qquad
 M_n=|\mathcal A|.
\]
For every $u\in F^*$ there is a unique parameter
\[
 a=u+u^{r-1}
\]
for which $u\in V_a$. Then we have $a=0$ if and only if
$u^{r-2}=1$. Note that
\[
\gcd(r-2,q-1)
=\gcd(2^m-1,2^{2m+1}-1)
=2^{\gcd(m,2m+1)}-1=1.
\]
Hence $a=0$ if and only if $u=1$.  Lemma~\ref{lem:tracecriterion} therefore shows that
\begin{align}\label{eq: partition}
\{u\in F^*: \Tr(u^{-1})=1\}\setminus\{1\}
 =
\bigsqcup_{a\in\mathcal A}\bigl(V_a\setminus\{0\}\bigr).
\end{align}
Since the map $ \Tr:F\to\mathbb F_2$
is surjective, we have $F/T\cong\mathbb F_2$. Hence
\[
\#T
 =
\#\{u\in F:\Tr(u)=1\}
 =
 \frac q2.
\]
Note that $1/x$ is a permutation of $F^*$. It follows that
\[
 \#\{u\in F^*:\Tr(u^{-1})=1\}
 =
 \frac q2.
\]  
Also $\Tr(1)=1$ since $n$ is odd. Then, by \eqref{eq: partition} one has
$$\frac{q}{2} -1=\#\{u\in F^*: \Tr(u^{-1})=1\}\setminus\{1\}
 =
\#\bigsqcup_{a\in\mathcal A}\bigl(V_a\setminus\{0\}\bigr)=\sum_{a\in\mathcal A}\#V_a\setminus\{0\}.$$
Since $\#V_a\setminus\{0\}=3$ for every $a\in\mathcal A$, one obtains
\begin{equation}\label{eq:M}
M_n=\frac{q/2-1}{3}=\frac{q-2}{6}.
\end{equation}

Next put
\[
 A_n=\#\{u\in F^*: \Tr(u)=0,\ \Tr(u^{-1})=1\}.
\]
We now write $\mathcal A=\mathcal A_1\sqcup\mathcal A_2$,
where $\mathcal A_1=\{a\in\mathcal A:V_a\subset T\}$ and 
$\mathcal A_2=\{a\in\mathcal A:V_a\not\subset T\}$.
If $a\in\mathcal A_1$, then all three nonzero elements of $V_a$
have trace zero. By Lemma~\ref{lem:tracecriterion}, they also satisfy
$\Tr(u^{-1})=1$, and hence all three are counted by $A_n$.

Now let $a\in\mathcal A_2$. Since $V_a\not\subset T$, the restriction $\Tr|_{V_a}:V_a\longrightarrow\mathbb F_2$
is nonzero, and hence is surjective. Therefore
\[
 \dim\ker(\Tr|_{V_a})
 =
 \dim V_a-\dim\mathbb F_2
 =
 2-1=1.
\]
Thus $\ker(\Tr|_{V_a})$ has two elements, one of which is $0$.
Consequently, there is exactly one nonzero element $u\in V_a$
such that $\Tr(u)=0$. By Lemma~\ref{lem:tracecriterion}, this element
also satisfies $\Tr(u^{-1})=1$. Hence each $V_a$ with
$a\in\mathcal A_2$ contributes exactly one element to $A_n$.

Let $B_n$ be the number of three-dimensional $\F_2$-subspaces of $F$ containing $1$, with zero Welch sum. By Proposition~\ref{prop:B-kernels},
\[
 \#\mathcal A_1=B_n,
 \qquad
 \#\mathcal A=M_n,
\]
and therefore $\#\mathcal A_2=M_n-B_n$. Hence
\begin{equation}\label{eq:ABM}
 A_n
 =
 3B_n+(M_n-B_n)
 =
 M_n+2B_n.
\end{equation}

Let $\chi(x)=(-1)^{\Tr(x)}$ be the canonical additive character of $F$.  By the definition of $A_n$, one has
\begin{align}
 A_n
&=\frac14\sum_{u\in F^*}
 (1+\chi(u))(1-\chi(u^{-1}))\notag\\
 &=\frac14\Big(
 q-1
+\sum_{u\in F^*}\chi(u)
 -\sum_{u\in F^*}\chi(u^{-1})
 -\sum_{u\in F^*}\chi(u+u^{-1})
\Big).\notag
\end{align}
Since inversion permutes $F^*$, the middle two sums are equal and hence
cancel. Therefore, by \eqref{eq:Kloosterman}, we have
\begin{equation}\label{eq:A}
 A_n=\frac{q-1-\Kl_n}{4}.
\end{equation}
Substituting \eqref{eq:M} and \eqref{eq:A} into \eqref{eq:ABM}, we find
\[
 B_n
 =\frac12\left(\frac{q-1-\Kl_n}{4}-\frac{q-2}{6}\right)
 =\frac{q+1-3\Kl_n}{24},
\]
which proves \eqref{eq:B}.

\begin{proof}[Proof of Theorem~\ref{thm:main}]
We first consider $k=3$. Let us count the pairs $(E,e)$ such that
\[ \dim_{\F_2}E=3,\qquad
\sum_{x\in E}W_n(x)=0,\qquad
 e\in E\setminus\{0\}.
\]
On the one hand, for each such subspace $E$, there are seven choices for $e$, and hence
the number of pairs is $7N_{n,3}(0)$.
On the other hand, fix $e\in F^*$. Multiplication by $e^{-1}$ gives a
bijection between the zero-Welch-sum three-dimensional subspaces
containing $e$ and those containing $1$. Hence there are exactly $B_n$
such subspaces containing each fixed $e\ne0$. This implies that the
same number of pairs is also equal to $(q-1)B_n$. Therefore
\[
 7N_{n,3}(0)=(q-1)B_n,
\]
which gives
\[
 N_{n,3}(0)=\frac{(q-1)(q+1-3\Kl_n)}{168}.
\]

We next count the subspaces with nonzero sum. Put $d=2^m+3$. First, one claims that 
$\gcd(d,q-1)=1$. In fact, let $g$ be a common divisor of $d$ and $q-1$. Since
$g\mid d$, we have
$2^m\equiv -3\pmod g$, and then $2^{2m+1}=2(2^m)^2\equiv18\pmod g$.
On the other hand, $g\mid q-1=2^{2m+1}-1$, that is, $2^{2m+1}\equiv1\pmod g$. Thus we have $g\mid17$. Now suppose $g=17$. Then $2^n\equiv1\pmod{17}$. Note that the multiplicative order of $2$ modulo $17$ is $8$. It implies
$8\mid n$, which is impossible because $n=2m+1$ is odd. Therefore
$g=1$, and hence
$\gcd(d,q-1)=1$. Thus
$x\mapsto x^d$ permutes $F^*$. Hence, for any $b,c\in F^*$, one can choose $\lambda\in F^*$
such that $\lambda^d=c/b$. Since
\[
\sum_{x\in\lambda E}W_n(x)
=\lambda^d\sum_{x\in E}W_n(x),
\]
the map $E\mapsto \lambda E$
gives a bijection between the subspaces with Welch sum $b$
and those with Welch sum $c$. Therefore all nonzero Welch sums occur
equally often. Consequently, for every $b\in F^*$
\[
 N_{n,3}(b)
 =
 \frac{\gauss n3-N_{n,3}(0)}{q-1}=\frac{q^2-7q+7+3\Kl_n}{168}.
\]

It remains to consider the other dimensions. If $k=1$, then
$E=\{0,a\}$ with $a\ne0$, and its sum is $a^{2^m+3}$. Since
$\gcd(2^m+3,q-1)=1$, every element $b$ of $F^*$ there exists a unique $a\in F^*$ such that $a^{2^m+3}=b$. Hence
$N_{n,1}(0)=0$ and $N_{n,1}(b)=1$ for $b\ne0$.

If $k=2$, the Welch function is APN \cite{Dobbertin1999}, hence it is
second-order sum-free. Thus $N_{n,2}(0)=0$. As above, all nonzero Welch sums occur
equally often. Then we have
$$N_{n,2}(b)=\frac{\gauss n2-N_{n,2}(0)}{q-1}=\frac{q-2}{6}$$
for every $b\in F^*$.

Finally, let $k\ge4$. Let $E=\langle e_1,\ldots,e_k\rangle_{\F_2}$ be a $k$ dimensional $\F_2$-subspace of $F$ with an $\F_2$- basis $(e_1,\ldots,e_k)$. Since $W_n$ has algebraic degree three,
every derivative of order at least four is identically zero. Hence
\[
\sum_{x\in E}W_n(x)=\sum_{\varepsilon_1,\ldots,\varepsilon_k\in\F_2}
W_n(\varepsilon_1e_1+\cdots+\varepsilon_ke_k)=D_{e_1}\cdots D_{e_k}W_n(0)
 =0,
\]
where $D_a W_n(x)=W_n(x+a)+W_n(x)$. 
Thus every $k$-dimensional subspace has zero Welch sum. It follows that $N_{n,k}(0)=\gauss nk$, and $N_{n,k}(b)=0$ for every $b\ne 0$. This completes the proof of Theorem \ref{thm:main}.
\end{proof}

\begin{proof}[Proof of Corollary~\ref{cor:HZ}]
By Theorem \ref{thm:main}, together with the classic Weil bound for binary Kloosterman sums $|\Kl_n|\le2\sqrt q$ (see, for example,
\cite[Theorem~6.8]{Moisio2008}), we have
\[
 N_{n,3}(0)\ge \frac{q-1}{168}(q+1-6\sqrt q)>0
 \qquad(q\ge128).
\]
This proves the assertion.
\end{proof}

\section{Kloosterman recurrence and the Dickson determinant}

In this section, we shall make the Kloosterman term appearing in
Theorem~\ref{thm:main} more explicit. For every integer $j\ge1$ we write
\[
\Kl_j=\sum_{u\in\F_{2^j}^*}
(-1)^{\Tr_{\F_{2^j}/\F_2}(u+u^{-1})}.
\]
We first realize $-\Kl_n$ as the
Frobenius trace of a fixed elliptic curve. This gives a simple recurrence for
the sequence $(\Kl_j)_{j\ge1}$, which will then be related to the Dickson
determinant arising in our counting formula. 

We recall some standard facts about elliptic curves over finite fields.
Let $Q$ be a prime power. Let $\mathcal E$ be an elliptic curve over $\F_Q$, and let
\[
 \pi:\mathcal E\longrightarrow\mathcal E,\qquad
 (x,y)\longmapsto(x^Q,y^Q)
\]
be the Frobenius endomorphism. We write
$\#\mathcal E(\F_Q)$ for the number of $\F_Q$-rational points on
$\mathcal E$, including the point at infinity. If
\[
 t=Q+1-\#\mathcal E(\F_Q),
\]
then $t$ is called the \textit{Frobenius trace}, and the characteristic
polynomial of the Frobenius endomorphism is
$ X^2-tX+Q$

We shall use the following standard fact, see \cite[Theorem V.2.3.1(a)]{Silverman2009}.

\begin{lem}\label{lem:frobenius-points}
Let $\alpha$ and $\beta$ be the roots of the characteristic polynomial
\[
 X^2-tX+Q
\]
of the Frobenius endomorphism of an elliptic curve $\mathcal E/\F_Q$.
Then, for every $j\ge1$,
\[
 \#\mathcal E(\F_{Q^j})
 =
 Q^j+1-(\alpha^j+\beta^j).
\]
\end{lem}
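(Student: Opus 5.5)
The plan is to argue via the Weil conjectures for curves, in the standard form that relates point counts over extensions to Frobenius eigenvalues. First I would recall that for a smooth projective curve $C$ of genus $g$ over $\F_Q$, the zeta function
\[
Z(C,T)=\exp\Bigl(\sum_{j\ge1}\#C(\F_{Q^j})\frac{T^j}{j}\Bigr)
\]
is rational of the form $P(T)/((1-T)(1-QT))$ with $\deg P=2g$. For an elliptic curve ($g=1$), write $P(T)=(1-\alpha T)(1-\beta T)$. Taking logarithmic derivatives and comparing coefficients of $T^j$ then gives
\[
\#\mathcal E(\F_{Q^j})=Q^j+1-(\alpha^j+\beta^j).
\]
What remains is to identify $P(T)$ with the reversed characteristic polynomial $1-tT+QT^2$ of the Frobenius $\pi$.

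For this identification, I would argue directly with isogenies rather than invoke the full Weil machinery. The points of $\mathcal E(\F_{Q^j})$ are exactly the kernel of $1-\pi^j$. Since $\pi$ is purely inseparable, $1-\pi^j$ is separable, so
\[
\#\mathcal E(\F_{Q^j})=\deg(1-\pi^j).
\]
The degree map is a positive definite quadratic form on $\operatorname{End}(\mathcal E)$, and it satisfies $\deg(1-\psi)=1-\operatorname{tr}(\psi)+\deg\psi$, where $\operatorname{tr}(\psi)=\psi+\hat\psi$. For $\psi=\pi$ this gives $t=\pi+\hat\pi=Q+1-\#\mathcal E(\F_Q)$. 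It also shows that $\pi$ satisfies $\pi^2-t\pi+Q=0$ in $\operatorname{End}(\mathcal E)$.

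Next I would pass to the $\ell$-adic Tate module $T_\ell(\mathcal E)$ for a prime $\ell\ne p$. Via the Weil pairing, $\det(\psi_\ell)=\deg\psi$ and $\operatorname{tr}(\psi_\ell)=1+\deg\psi-\deg(1-\psi)$. Hence the characteristic polynomial of $\pi_\ell$ is $X^2-tX+Q$, with eigenvalues $\alpha,\beta$. The eigenvalues of $\pi_\ell^j$ are then $\alpha^j,\beta^j$, so
\[
\deg(1-\pi^j)=\det(1-\pi_\ell^j)=(1-\alpha^j)(1-\beta^j)=1-(\alpha^j+\beta^j)+Q^j,
\]
using $\alpha\beta=Q$. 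This is exactly the claimed formula.

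The main obstacle is the Tate-module step, namely establishing $\det(\psi_\ell)=\deg\psi$, which requires the Weil pairing and its compatibility with dual isogenies. Since this is a textbook fact, and the paper cites Silverman for this lemma, I would simply quote it rather than reprove it.
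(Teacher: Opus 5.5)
Your argument is correct and is essentially the standard Tate-module proof in the reference the paper cites for this lemma (Silverman, Theorem V.2.3.1): $\#\mathcal E(\F_{Q^j})=\deg(1-\pi^j)$ by separability of $1-\pi^j$, then $\deg(1-\pi^j)=\det(1-\pi_\ell^j)=(1-\alpha^j)(1-\beta^j)$; the paper itself gives no proof beyond this citation. Your opening zeta-function paragraph is superfluous, since the isogeny computation gives the formula on its own, and you could even avoid the Tate module by writing $\deg(1-\pi^j)=1-(\pi^j+\hat\pi^j)+Q^j$ and noting that $\pi^j+\hat\pi^j=\alpha^j+\beta^j$, because $\pi$ and $\hat\pi$ commute in $\mathbb Z[\pi]$ with sum $t$ and product $Q$.
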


We now apply Lemma~\ref{lem:frobenius-points} to the elliptic curve
over $\F_2$
\[
 \mathcal E:\quad Y^2+XY=X^3+1.
\]
For $x\ne0$, put $z=y/x$. Dividing the equation by $x^2$, we obtain
\[
 z^2+z=x+x^{-2}.
\]
For $c\in F=\F_{2^n}$, the equation
$z^2+z=c$
has $1+\chi(c)$ solutions in $F$, where $\chi(c)=(-1)^{\Tr(c)}$.
Hence the number of affine points with $x\ne0$ is
\[
 \sum_{x\in F^*}\bigl(1+\chi(x+x^{-2})\bigr).
\]
Since
$\Tr(x^{-2})=\Tr(x^{-1})$ for any $x\in F^*$,
we have $\chi(x+x^{-2})=\chi(x+x^{-1})$.
Therefore, by \eqref{eq:Kloosterman}, the number of such points is $2^n-1+\Kl_n$. In addition, there is one affine point with $x=0$, namely $(0,1)$, and one point at
infinity. Thus
\begin{equation}\label{eq:curve}
 \#\mathcal E(\F_{2^n})
 =
 2^n+1+\Kl_n.
\end{equation}
Now taking $n=1$ in \eqref{eq:curve}, we obtain
$ \#\mathcal E(\F_2)=4$.
Hence the Frobenius trace of $\mathcal E/\F_2$ is $-1$
and the characteristic polynomial of its Frobenius endomorphism is
$X^2+X+2$. Let $\alpha=\frac{-1+i\sqrt7}{2}$ and $\beta=\frac{-1-i\sqrt7}{2}$
be its roots. By Lemma~\ref{lem:frobenius-points},
\[
 \#\mathcal E(\F_{2^n})
 =
 2^n+1-(\alpha^n+\beta^n).
\]
Comparing this with \eqref{eq:curve}, we obtain
\begin{equation}\label{eq:Kl-roots}
 \Kl_n=-(\alpha^n+\beta^n)\quad (n\ge 1).
\end{equation}
It implies that
\begin{equation}\label{eq:recurrence}
 \Kl_1=1,\qquad
 \Kl_2=3,\qquad
 \Kl_j=-\Kl_{j-1}-2\Kl_{j-2}
 \quad(j\ge3).
\end{equation}
Thus the quantities $\Kl_n$ appearing in Theorem~\ref{thm:main} can be computed
from this second-order recurrence.

Moreover, since
$|\alpha|=|\beta|=\sqrt2$, by \eqref{eq:Kl-roots} we recover the estimate used
in the previous section,
\[
 |\Kl_n|
 \le |\alpha|^n+|\beta|^n
 =2^{n/2+1}
 =2\sqrt q.
\]

We can also recover the determinant computations of Hou and Zhao.
Let $D^*(X)$ denote the matrix introduced in
\cite[Section~5.3]{HouZhao2026}.  For odd $n\ge5$, their criterion and
\eqref{eq:polarization} show that, apart from $x=0,1$, a root of
$\det D^*(X)$ in $F$ corresponds to a nonzero element
$t=x^2+x$ lying in a three-element fiber of $S(t)/t$.  By the proof of
Proposition~\ref{prop:B-kernels}, each zero-sum subspace containing $1$
corresponds to one such two-dimensional fiber, which has exactly three
nonzero elements.  Thus the $B_n$ zero-sum subspaces containing $1$ account for
$3B_n$ distinct values of $t$.  For each $t\in T^*$, the equation
$x^2+x=t$ has exactly two solutions, namely $x$ and $x+1$.  They give $6B_n$ roots outside $\F_2$; the two remaining roots
are $x=0,1$.  Since $X^q+X$ has simple roots, we obtain
\begin{equation}\label{eq:HZdegree}
\deg\gcd\bigl(\det D^*(X),X^q+X\bigr)
 =2+6B_n
 =\frac{q+9-3\Kl_n}{4}.
\end{equation}
Hence \eqref{eq:HZdegree} gives a closed formula for the numerical sequence
in \cite[Table~1]{HouZhao2026}.

\begin{rem}
For $n=5$, the recurrence \eqref{eq:recurrence} gives $\Kl_5=11$, and
Theorem~\ref{thm:main} yields $N_{5,3}(0)=0$. For $n=3$, the same theorem
gives $N_{3,3}(0)=1$. Corollary~\ref{cor:HZ} gives $N_{n,3}(0)>0$ for every odd
$n\ge7$.  It follows that the Welch function is third-order sum-free exactly for
$n=5$ among odd extension degrees $n\ge3$. For $n\ge5$, every three-dimensional affine
subspace with a fixed direction has the same Welch sum, so the corresponding
affine counts are simply $qN_{n,3}(b)/8$.
\end{rem}

\enlargethispage{2\baselineskip}

\end{document}